\title{Estimates for spectral density functions of  matrices over $\IC[\IZ^d]$}
\author{L\"uck, W.}
        \address{Mathematicians Institut der Universit\"at Bonn\\
                Endenicher Allee 60\\
                53115 Bonn, Germany}
         \email{wolfgang.lueck@him.uni-bonn.de}
          \urladdr{http://www.him.uni-bonn.de/lueck}
         \date{October, 2014}
              \keywords{spectral density function, Novikov-Shubin invariants}
     \subjclass[2010]{46L99, 58J50}
\DeclareMathAlphabet\EuR{U}{eur}{m}{n}
\SetMathAlphabet\EuR{bold}{U}{eur}{b}{n}
\theoremstyle{plain}
\newtheorem{theorem}{Theorem}[section]
\newtheorem{lemma}[theorem]{Lemma}
\newtheorem{proposition}[theorem]{Proposition}
\theoremstyle{definition}
\newtheorem{remark}[theorem]{Remark}
\global\let\c@equation=\c@theorem}
\newcommand{\comsquare}[8]                   % Produces a commutative square
{\begin{CD}
#1 @>#2>> #3\\
@V{#4}VV @V{#5}VV\\
#6 @>#7>> #8
\end{CD}
}
\newcommand{\xycomsquare}[8]                   % kommutatives Quadrat (xy-Version)
{\xymatrix
{#1 \ar[r]^{#2} \ar[d]^{#4} &
#3 \ar[d]^{#5}  \\
#6\ar[r]^{#7} &
#8
}
}
\newcommand{\xycomsquareminus}[8]                      % kommutatives Quadrat (xy-Version)
{\xymatrix{#1 \ar[r]^-{#2} \ar[d]^-{#4} &
#3 \ar[d]^-{#5}  \\
#6\ar[r]^-{#7} &
#8
}
}
\newcommand{\caln}{{\mathcal N}}
\newcommand{\IC}{{\mathbb C}}
\newcommand{\IR}{{\mathbb R}}
\newcommand{\IZ}{{\mathbb Z}}
\newcommand{\im}{\operatorname{im}}
\newcommand{\lead}{\operatorname{lead}}
\newcommand{\pr}{\operatorname{pr}}
\newcommand{\width}{\operatorname{wd}}
\newcommand{\higherlim}[3]{{\setbox1=\hbox{\rm lim}
        \setbox2=\hbox to \wd1{\leftarrowfill} \ht2=0pt \dp2=-1pt
        \mathop{\vtop{\baselineskip=5pt\box1\box2}}
        _{#1}}^{#2}#3}
\newcommand{\version}[1]                       %marks the date of last editing and compilation
{\begin{center} last edited on #1\\
last compiled on \today\\
name of texfile: \jobname
\end{center}
}
\newcounter{commentcounter}
\begin{document}

\typeout{----------------------------  spectral.tex  ----------------------------}

%%%%%%%%%%%%%%%%%%%%%%%%%%%%%%%%%%%%%%%%%%%%%%%%%%%%%%%%%%%%%%%%%%%%%%%%%%%%%%%%%
%%%%%%%%%%%%%%%%%%%%%%%%%%%%%%%%%%% Abstract  %%%%%%%%%%%%%%%%%%%%%%%%%%%%%%%%%%%%%%%
%%%%%%%%%%%%%%%%%%%%%%%%%%%%%%%%%%%%%%%%%%%%%%%%%%%%%%%%%%%%%%%%%%%%%%%%%%%%%%%%%

\typeout{------------------------------------ Abstract ----------------------------------------}

\begin{abstract}
  We give a polynomial bound on the spectral density function of a matrix over the complex
  group ring of $\IZ^d$.  It yields an explicit lower bound on the Novikov-Shubin
  invariant associated to this matrix showing in particular that the Novikov-Shubin
  invariant is larger than zero.
\end{abstract}

\maketitle

%%%%%%%%%%%%%%%%%%%%%%%%%%%%%%%%%%%%%%%%%%%%%%%%%%%%%%%%%%%%%%%%%%%%%%%%%%%%%%%%%
%%%%%%%%%%%%%%%%%%%%%%%%%%%%%%%%%% Introduction %%%%%%%%%%%%%%%%%%%%%%%%%%%%%%%%%%%%%
%%%%%%%%%%%%%%%%%%%%%%%%%%%%%%%%%%%%%%%%%%%%%%%%%%%%%%%%%%%%%%%%%%%%%%%%%%%%%%%%%

 \typeout{-------------------------------   Section 0: Introduction --------------------------------}

\section{Introduction}

%%%%%%%%%%%%%%%%%%%%%%%%%%%%%%%%%%%%%%%%%%%%%%%%%%%%%%%%%%%%%%%%%%%%%%%%%%%%%%% 

\subsection{Summary}
\label{subsec:summary}

The main result of this paper is that for a $(m,n)$-matrix $A$ over the complex group ring of
$\IZ^d$ the Novikov-Shubin invariant of the bounded $\IZ^d$-equivariant operator 
$r_A^{(2)} \colon L^2(\IZ^d)^m \to L^2(\IZ^d)^n$ given by right multiplication with $A$ is larger
than zero. Actually rather explicit lower bounds in terms of elementary invariants of the
minors of the matrix $A$ will be given.
This is a direct consequence of a polynomial bound of the
spectral density function of $r_A^{(2)} $ which is interesting in its own right.
It will play a role in the forthcoming paper~\cite{Friedl-Lueck(2015twisting)}, where 
we will twist $L^2$-torsion with finite dimensional  representations and it
will be crucial that we allow complex coefficients and not only integral coefficients.

Novikov-Shubin invariants were  originally defined analytically in
\cite{Novikov-Shubin(1986b),Novikov-Shubin(1986a)}. More information about them can be found 
for instance in~\cite[Chapter~2]{Lueck(2002)}.

Before we state the main result, we need the following notions.

%%%%%%%%%%%%%%%%%%%%%%%%%%%%%%%%%%%%%%%%%%%%%%%%%%%%%%%%%%%%%%%%%%%%%%%%%%%%%%% 

\subsection{The width and the leading coefficient}
\label{subsec:The_Width_and_the_leading_coefficient}

Consider a non-zero  element 
$p = p(z_1^{ \pm 1}, \ldots, z_{d}^{\pm 1}) $ in
$\IC[\IZ^d] = \IC[z_1^{ \pm 1}, \ldots, z_{d}^{\pm 1}]$ for some integer $d \ge 1$.

There are integers $n_d^-$ and $n_d^+$  and elements $q_n(z_1^{\pm 1},\ldots ,z_{d-1}^{ \pm 1})$ in
$\IC[\IZ^{d-1}] = \IC[z_1^{ \pm 1}, \ldots, z_{d-1}^{\pm 1}]$ uniquely determined by the properties
that 
\begin{eqnarray*}
n_d^- 
& \le & 
n_d^+;
\\
q_{n_d^-}(z_1^{\pm 1},\ldots ,z_{d-1}^{ \pm 1}) 
& \not= &
0;
\\
q_{n_d^+}(z_1^{\pm 1},\ldots ,z_{d-1}^{ \pm 1})
& \not= &
0;
\\
p(z_1^{ \pm 1}, \ldots, z_{d}^{\pm 1}) 
& = &
\sum_{n = n_d^-}^{n_d^+} 
q_n(z_1^{\pm 1},\ldots ,z_{d-1}^{ \pm 1}) \cdot z_d^n.
\end{eqnarray*}
In the sequel denote
\begin{eqnarray*}
w(p) & = & n_d^+ -n_d^-;
\\
q^+(p) & = & q_{n_d^+}(z_1^{\pm 1},\ldots ,z_{d-1}^{ \pm 1}).
\end{eqnarray*}

Define inductively elements $p_i(z_1^{ \pm 1}, \ldots, z_{d-i}^{\pm 1}) $ in
$\IC[\IZ^{d-i}] = \IC[z_1^{ \pm 1}, \ldots, z_{d-i}^{\pm 1}]$ and integers 
$w_i(p) \ge 0$ for $i = 0,1,2, \ldots, d$ by
\begin{eqnarray*}
p_0(z_1^{ \pm 1}, \ldots, z_{d}^{\pm 1})  
& := & 
p(z_1^{ \pm 1}, \ldots, z_{d}^{\pm 1});
\\
p_1(z_1^{\pm 1},\ldots ,z_{d-1}^{ \pm 1}) 
& := & 
q^+(p)
\\
p_{i} 
& := & 
q^+(p_{i-1}) \quad \text{for } i = 1,2 \ldots, d;
\\
w_0(p) 
& := & w(p)
\\
w_i(p) 
& := & 
w(p_i) \quad \text{for } i = 1,2 \ldots, (d-1).
\end{eqnarray*}

Define the \emph{width} of $p = p(z_1^{ \pm 1}, \ldots, z_{d}^{\pm 1})$ to be
\begin{eqnarray}
\width(p) = \max\{w_0(p), w_1(p), \ldots, w_{d-1}(p)\},
\label{width(p)}
\end{eqnarray}
and the leading \emph{coefficient of $p$} to be
\begin{eqnarray}
\lead(p) & = & p_d.
\label{lead(p)}
\end{eqnarray}

Obviously we have
\begin{eqnarray*}
& \width(p) \ge \width(p_1) \ge \width(p_2) \ge \cdots \ge \width(p_d) = 0; &
\\
& \lead(p) = \lead (p_1) = \ldots = \lead(p_0) \not= 0.
\end{eqnarray*}

Notice that $p_i$, $\width(p)$ and $\lead(p)$ do depend on the ordering of the variables $z_1, \ldots , z_d$.

\begin{remark}[Leading coefficient]\label{rem:leading_coefficients}
The  name ``leading coefficient'' comes from the following alternative definition.
Equip $\IZ^d$ with the lexicographical order, i.e., we put $(m_1, \ldots, m_d) < (n_1,
\ldots, n_d)$, if $m_d < n_d$, or if $m_d = n_d$ and $m_{d-1} < n_{d-1}$, or if $m_d = n_d$, $m_{d-1}
= n_{d-1}$ and $m_{d-2} < n_{d-2}$, or if $\ldots$, or if $m_i = n_i$ for $i = d, (d-1), \ldots, 2$ and
$m_1 < n_1$. We can write $p$ as a finite sum with complex coefficients $a_{n_1, \ldots, n_d}$
\[p(z_1^{\pm}, \ldots, z_d^{\pm}) 
= \sum_{(n_1, \ldots,  n_d) \in \IZ^d} a_{n_1, \ldots, n_d} \cdot z_1^{n_1} \cdot z_2^{n_2} \cdot \cdots \cdot z_d^{n_d}.
\]
Let $(m_1, \ldots m_d) \in \IZ^d$ be maximal with respect to the lexicographical order among those elements
$(n_1, \ldots, n_d) \in \IZ^d$ for which $a_{n_1, \ldots, n_d} \not= 0$. Then  the leading coefficient
of $p$ is $a_{m_1, \ldots, m_d}$.
\end{remark}

%%%%%%%%%%%%%%%%%%%%%%%%%%%%%%%%%%%%%%%%%%%%%%%%%%%%%%%%%%%%%%%%%%%%%%%%%%%%%%% 

\subsection{The $L^1$-norm of a matrix}
\label{subsec:The_L1-norm_of_a_matrix}

For an element $p = \sum_{g \in \IZ^d} \lambda_g \cdot g \in \IC[\IZ^d]$ define
$||p||_1 := \sum_{g \in G} |\lambda_g|$. For a matrix $A \in M_{m,n}(\IC[\IZ^d])$ define
\begin{eqnarray}
||A||_1 & = & 
\max\{||a_{i,j}||_1 \mid 1 \le i \le m, 1 \le j \le n\}.
\label{L1-norm_of_a_matrix}
\end{eqnarray}

The main purpose of this notion is that it gives an a priori  upper bound on the norm $r_A^{(2)} \colon L^2(\IZ^d) \to L^2(\IZ^d)$,
namely, we get from~\cite[Lemma~13.33 on page~466]{Lueck(2002)}
\begin{eqnarray}
||r_A^{(2)}|| & \le & m \cdot n \cdot ||A||_1.
\label{norm_estimate_by_L1-norm}
\end{eqnarray}

%%%%%%%%%%%%%%%%%%%%%%%%%%%%%%%%%%%%%%%%%%%%%%%%%%%%%%%%%%%%%%%%%%%%%%%%%%%%%%%%%

\subsection{The spectral density function}
\label{subsec:The_spectral_density_function}

Given $A \in M_{m,n}(\IC[\IZ^d])$, multiplication with $A$ induces a bounded
$\IZ^d$-equivariant operator $r_A^{(2)} \colon L^2(\IZ^d)^m \to L^2(\IZ^d)^n$. We will denote by
\begin{eqnarray}
F\bigl(r_A^{(2)}\bigr) \colon [0,\infty) & \to & [0,\infty)
\label{spectral_density_function}
\end{eqnarray}
its \emph{spectral density function} in the sense
of~\cite[Definition~2.1 on page~73]{Lueck(2002)}, namely, the von Neumann dimension of
the image of the operator obtained by applying the functional calculus to the characteristic function
of $[0,\lambda^2]$ to the operator $(r^{(2)}_A)^*r_A^{(2)}$. In the special  case $m = n = 1$,
where $A$ is given by an element $p \in \IC[\IZ^d]$,
it can be computed in terms of the Haar measure $\mu_{T^d}$ of the $d$-torus $T^d$ 
see~\cite[Example~2.6 on page~75]{Lueck(2002)}
\begin{eqnarray}
F\bigl(r_A^{(2)}\bigr)(\lambda) & = & \mu_{T^d}\bigl(\{(z_1, \ldots, z_d) \in T^d \mid \; |p(z_1, \ldots, z_d)| \le \lambda\}\bigr).
\label{F(P)_in_terms_of_volume}
\end{eqnarray}

%%%%%%%%%%%%%%%%%%%%%%%%%%%%%%%%%%%%%%%%%%%%%%%%%%%%%%%%%%%%%%%%%%%%%%%%%%%%%%% 

\subsection{The main result}
\label{subsec:The_main_result}

Our main result is:

\begin{theorem}[Main Theorem]
\label{the:Main_Theorem}
Consider any natural numbers $d,m,n$ and a non-zero matrix $A \in M_{m,n}(\IC[\IZ^d])$. 
Let $B$ be a quadratic submatrix of $A$ of maximal size $k$ such that the corresponding minor
$p = {\det}_{\IC[\IZ^d]}(B)$ is non-trivial.   Then: 

\begin{enumerate}

\item \label{the:Main_Theorem:spectral_density_estimate}
If $\width(p) \ge 1$, the spectral density function 
of $r_A^{(2)} \colon L^2(\IZ^d)^m \to L^2(\IZ^d)^n$ satisfies for all $\lambda \ge 0$
\begin{multline*}
\lefteqn{F\bigl(r_A^{(2)}\bigr)(\lambda) - F\bigl(r_A^{(2)}\bigr)(0)}
\\ \le 
 \frac{8 \cdot \sqrt{3}}{\sqrt{47}} \cdot k \cdot  d \cdot \width(p)  
\cdot \left(\frac{k^{2k -2} \cdot (||B||_1)^{k-1} \cdot \lambda}{|\lead(p)|}\right)^{\frac{1}{d \cdot \width(p)}}.
\end{multline*}
If $\width(p) = 0$, then $F\bigl(r_A^{(2)}\bigr)(\lambda) = 0$ for all $\lambda < |\lead(p)|$
and $F\bigl(r_A^{(2)}\bigr)(\lambda) = 1$ for all $\lambda \ge  |\lead(p)|$;

\item \label{the:Main_Theorem:Novikov-Shubin}
The Novikov-Shubin invariant of $r_A^{(2)}$ is $\infty$ or $\infty^+$ or a real number satisfying
\[
\alpha\bigl(r_A^{(2)}\bigr) \ge \frac{1}{d \cdot \width(p)},
\]
and is in particular larger than zero.
\end{enumerate}
\end{theorem}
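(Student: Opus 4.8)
The plan is to reduce everything to the scalar case $m=n=1$ and then to estimate the volume appearing in~\eqref{F(P)_in_terms_of_volume}. First I would observe that part~\eqref{the:Main_Theorem:Novikov-Shubin} is a formal consequence of part~\eqref{the:Main_Theorem:spectral_density_estimate}: by definition the Novikov-Shubin invariant measures the polynomial rate at which $F(r_A^{(2)})(\lambda) - F(r_A^{(2)})(0)$ tends to $0$ as $\lambda \to 0^+$, and an upper bound of the shape $C\cdot \lambda^{1/(d\cdot\width(p))}$ immediately yields $\alpha(r_A^{(2)}) \ge \frac{1}{d\cdot\width(p)}$ (with the convention that $\width(p)=0$ gives $\infty$ or $\infty^+$, matching the second half of part~\eqref{the:Main_Theorem:spectral_density_estimate}). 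So the whole content is part~\eqref{the:Main_Theorem:spectral_density_estimate}.

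For part~\eqref{the:Main_Theorem:spectral_density_estimate}, the key reduction is to pass from the matrix $A$ to its minor $p$. The operator $(r_A^{(2)})^*r_A^{(2)}$ has an associated spectral density function, and a standard argument (block/Cramer manipulations over $\caln(\IZ^d)$, or weak exactness together with additivity of von Neumann dimension, as in Lück's book) shows that the ``defect'' $F(r_A^{(2)})(\lambda) - F(r_A^{(2)})(0)$ near $0$ is controlled by the corresponding defect for the scalar operator $r_p^{(2)}$, at the cost of the combinatorial factors $k$, $k^{2k-2}$ and $(||B||_1)^{k-1}$: expanding the $k\times k$ determinant $p=\det_{\IC[\IZ^d]}(B)$ as a sum of $k!$ products of $k$ entries and using~\eqref{norm_estimate_by_L1-norm} to bound operator norms yields an inequality of the form $F(r_A^{(2)})(\lambda)-F(r_A^{(2)})(0) \le F(r_p^{(2)})(c\cdot\lambda) - F(r_p^{(2)})(0)$ with $c = k^{2k-2}(||B||_1)^{k-1}$ (up to constants). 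I would make this precise first, since it cleanly isolates the genuinely analytic part.

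The heart of the matter is then: for a single nonzero $p \in \IC[\IZ^d]$ with $\width(p)\ge 1$, bound $\mu_{T^d}(\{z \in T^d : |p(z)| \le \lambda\})$ by $C\cdot d\cdot\width(p)\cdot(\lambda/|\lead(p)|)^{1/(d\cdot\width(p))}$. Here I would induct on $d$, slicing $T^d = T^{d-1}\times T^1$ and writing $p(z_1,\dots,z_d) = \sum_{n=n_d^-}^{n_d^+} q_n(z_1,\dots,z_{d-1})\,z_d^n$. For fixed $(z_1,\dots,z_{d-1})$ this is a Laurent polynomial in $z_d$ of width $w_0(p) = w(p) \le \width(p)$ with leading coefficient $q^+(p)(z_1,\dots,z_{d-1}) = p_1(z_1,\dots,z_{d-1})$; a one-variable estimate (relating the sublevel set of a degree-$w$ polynomial on the circle to $(\lambda/|\text{leading coeff}|)^{1/w}$, essentially a consequence of the elementary fact that such a polynomial has at most $w$ roots, packaged via an inequality like $|p(z_d)| \ge |q^+|\cdot\prod|z_d-\zeta_j|$ and a Chebyshev/area argument) bounds the $z_d$-slice measure by something like $w\cdot(\lambda/|p_1(z_1,\dots,z_{d-1})|)^{1/w}$. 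Integrating over $T^{d-1}$ and applying the inductive hypothesis to $p_1$ (whose leading coefficient is again $\lead(p)$ and whose width is $\le\width(p)$), using Hölder/Jensen to handle the $1/w$-power, produces the claimed bound; the explicit constant $\frac{8\sqrt{3}}{\sqrt{47}}$ will drop out of optimizing the one-variable lemma and the Hölder step. I expect the main obstacle to be the one-dimensional sublevel-set estimate with a \emph{sharp} constant and the bookkeeping in the induction to ensure the exponent is exactly $\frac{1}{d\cdot\width(p)}$ rather than something weaker like $\prod 1/w_i$ — this forces careful use of the monotonicity $\width(p)\ge\width(p_1)\ge\cdots$ and of the fact that $\lead$ is preserved at every stage.
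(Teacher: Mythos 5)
Your overall architecture is exactly the paper's: reduce part~\eqref{the:Main_Theorem:Novikov-Shubin} to part~\eqref{the:Main_Theorem:spectral_density_estimate}, pass from $A$ to the square submatrix $B$ and then to the scalar minor $p$ at the cost of the factors $k$ and $(k^2\,||B||_1)^{k-1}$, and prove the scalar estimate by induction on $d$ via Fubini on $T^d=T^{d-1}\times S^1$, feeding the slice polynomial in $z_d$ (with leading coefficient $p_1(z_1,\dots,z_{d-1})$) into a one-variable sublevel-set bound obtained by factoring into linear factors. Two execution details differ from the paper and one of them needs repair. First, the combining step: if you literally integrate the slice bound $w\cdot(\lambda/|p_1(z)|)^{1/w}$ over $T^{d-1}$ and try to control $\int_{T^{d-1}}|p_1|^{-1/w}\,d\mu$ by H\"older/Jensen plus the inductive hypothesis, the layer-cake computation shows this integral converges only when $w>(d-1)\cdot\width(p_1)$, which generally fails. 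The working mechanism is the trivial truncation of the slice measure by $1$ followed by a threshold split of $T^{d-1}$ into $\{|p_1|\le |\lead(p)|^{1/d}\lambda^{(d-1)/d}\}$ (estimated by the inductive hypothesis applied to $p_1$, using $\lead(p_1)=\lead(p)$) and its complement (where the slice bound applies with the leading coefficient bounded below); this split is precisely what produces the exponent $\frac{1}{d\cdot\width(p)}$ and the additive constant $(d-1)+1=d$. Second, the matrix-to-scalar reduction: the paper does not expand $\det(B)$ into $k!$ products; it diagonalizes $(r_B^{(2)})^*r_B^{(2)}$ as multiplication by $0\le f_1\le\dots\le f_k$ on $T^d$, uses $pp^*=\prod_i f_i$ to get $pp^*\le ||r_B^{(2)}||^{2(k-1)}f_1$, and hence $F(r_B^{(2)})(\lambda)\le k\cdot F(r_p^{(2)})(||r_B^{(2)}||^{k-1}\lambda)$, with $||r_B^{(2)}||\le k^2||B||_1$ supplying the explicit constant; the step $F(r_A^{(2)})(\lambda)-F(r_A^{(2)})(0)\le F(r_B^{(2)})(\lambda)$ comes from factoring $r_B^{(2)}$ through the weak isomorphism $(r_A^{(2)})^{\perp}$ using norm-$\le 1$ inclusions and projections. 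With the threshold splitting in place of the H\"older step, your outline matches the paper's proof.
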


It is known that the Novikov-Shubin invariants of $r_A^{(2)}$ for a matrix $A$  over the integral group ring of $\IZ^d$ 
is  a rational numbers larger than zero unless its value is $\infty$ or $\infty^+$.
This follows from Lott~\cite[Proposition~39]{Lott(1992a)}.
(The author of~\cite{Lott(1992a)}
informed us that his proof of this statement is correct
when $d = 1$ but has a gap when $d > 1$.
The nature of the gap is
described in~\cite[page 16]{Lott(1999b)}.
The proof in this case can be completed
by the same basic method used in~\cite{Lott(1992a)}.) This confirms a 
conjecture of Lott-L\"uck~\cite[Conjecture~7.2]{Lott-Lueck(1995)} for $G = \IZ^d$.
The case of a finitely generated free group $G$ is taken care of by Sauer~\cite{Sauer(2003)}. 

Virtually  finitely generated free abelian groups and virtually finitely generated free groups 
are the only cases of finitely generated groups, where the positivity of the Novikov-Shubin invariants for
all matrices over the complex group ring is now known.
In this context we mention the preprints~\cite{Grabowski(2014),Grabowski-Virag(2013)},
where examples of groups $G$ and matrices $A \in M_{m,n}(\IZ G)$ are constructed for which
the Novikov-Shubin invariant of $r_A^{(2)}$ is zero, disproving a
conjecture of Lott-L\"uck~\cite[Conjecture~7.2]{Lott-Lueck(1995)}.

%%%%%%%%%%%%%%%%%%%%%%%%%%%%%%%%%%%%%%%%%%%%%%%%%%%%%%%%%%%%%%%%%%%%%%%%%%%%%%% 

\subsection{Example}
\label{sec:Example}

Consider the case $d = 2$, $m = 3$ and $n = 2$ and the $(3,2)$-matrix over $\IC[\IZ^2]$
\[
A = 
\begin{pmatrix}
z_1^3 & -1  & 1
\\
2 \cdot z_1 \cdot z_2^2 -16 & z_2 & z_1z_2
\end{pmatrix}
\]
Let $B$ be the $(2,2)$-submatrix obtained by deleting third column.
Then $k = 2$, 
\[
B = 
\begin{pmatrix}
z_1^3 & -1  
\\
2 \cdot z_1 \cdot z_2^2 -16 & z_2
\end{pmatrix}
\] 
and we get
\[
p := {\det}_{\IC[\IZ^2]}(B) = z_1^3 \cdot z_2  + 2 \cdot z_1 \cdot z_2^2 -16.
\]
Using the notation of Section~\ref{subsec:The_Width_and_the_leading_coefficient} 
one easily checks $p_1(z_1)  =  2 \cdot z_1$, $\width(p) = 2$, and $\lead(p)  =  2$.
Obviously $||A||_1 = \max\{|1|,|-1|,|2| + |16|,|1|\} = 18$. Hence
Theorem~\ref{the:Main_Theorem} implies for all $\lambda \ge 0$
\begin{eqnarray*}
F\bigl(r_A^{(2)}\bigr)(\lambda) - F\bigl(r_A^{(2)}\bigr)(0)
& \le  &
 \frac{192\cdot \sqrt{2}}{\sqrt{47}}
\cdot \lambda^{\frac{1}{4}}.
\\
\alpha\bigl(r_A^{(2)}\bigr) 
& \ge &
\frac{1}{4}.
\end{eqnarray*}

%%%%%%%%%%%%%%%%%%%%%%%%%%%%%%%%%%%%%%%%%%%%%%%%%%%%%%%%%%%%%%%%%%%%%%%%%%%%%%% 

\subsection{Acknowledgments}
\label{sec:Acknowledgments.}  This paper is financially supported
by the Leibniz-Preis of the author granted by the Deutsche Forschungsgemeinschaft {DFG}.
The author wants to thank the referee for his useful comments.

%%%%%%%%%%%%%%%%%%%%%%%%%%%%%%%%%%%%%%%%%%%%%%%%%%%%%%%%%%%%%%%%%%%%%%%%%%%%%%%%%
%%%%%%%%%%%%%%%%%%%%%%%%%%%%% Section 2: The case m = n = 1    %%%%%%%%%%%%%%%%%%%%%%%%%%%%%
%%%%%%%%%%%%%%%%%%%%%%%%%%%%%%%%%%%%%%%%%%%%%%%%%%%%%%%%%%%%%%%%%%%%%%%%%%%%%%%%%

\typeout{------------------------  Section 2: The case $m = n = 1$  -------------------------------}

\section{The case $m = n = 1$}
\label{subsec:The_case_m_is_n_is_1}

The main result of this section is the following

\begin{proposition}
\label{pro:case_m_is_n_is_1}
Consider an  non-zero element 
$p$ in $\IC[\IZ^d] = \IC[z_1^{ \pm 1}, \ldots, z_{d}^{\pm 1}]$.
If $\width(p) = 0$, then $F\bigl(r_A^{(2)}\bigr)(\lambda) = 0$ for all $\lambda < |\lead(p)|$
and $F\bigl(r_A^{(2)}\bigr)(\lambda) = 1$ for all $\lambda \ge  |\lead(p)|$. If $\width(p) \ge 1$,
we get for the  spectral density function of $r_p^{(2)}$ for all $\lambda \ge 0$
\[
F\bigl(r_p^{(2)}\bigr)(\lambda) 
\le 
 \frac{8 \cdot \sqrt{3}}{\sqrt{47}} \cdot d \cdot \width(p)  
\cdot \left(\frac{\lambda}{|\lead(p)|}\right)^{\frac{1}{d \cdot \width(p)}}.
\]
\end{proposition}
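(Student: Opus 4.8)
\emph{Plan.} The idea is to combine the identity \eqref{F(P)_in_terms_of_volume}, which expresses $F\bigl(r_p^{(2)}\bigr)$ as the Haar measure of a sublevel set, with an elementary estimate for the zero set on $T^1$ of a one--variable polynomial, and to feed this estimate through the chain $p = p_0, p_1, \dots, p_d = \lead(p)$ built into the definition of the width. If $\width(p) = 0$, then every $w_i(p)$ vanishes, so inductively each $p_i$ is $p_{i+1}$ times a monomial, hence $|p(z)| = |\lead(p)|$ for all $z \in T^d$, and \eqref{F(P)_in_terms_of_volume} gives the asserted behaviour. So assume $\width(p) \ge 1$. Since $F\bigl(r_p^{(2)}\bigr)(0) = 0$ (the zero set of the real--analytic function $|p|^2$ on $T^d$ has Haar measure zero) and since for $\lambda \ge |\lead(p)|$ the right--hand side of the claimed inequality is at least $\tfrac{8\sqrt 3}{\sqrt{47}}\, d\, \width(p) \ge \tfrac{8\sqrt 3}{\sqrt{47}} > 1 \ge F\bigl(r_p^{(2)}\bigr)(\lambda)$, it suffices to treat $0 < \lambda < |\lead(p)|$.

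\emph{A one--variable estimate.} For a polynomial $f(z) = c\prod_{i=1}^{N}(z - \alpha_i) \in \IC[z]$ of degree $N \ge 1$ and for $0 \le \lambda < |c|$ one has
\[
\mu_{T^1}\bigl(\{z \in T^1 \mid |f(z)| \le \lambda\}\bigr) \;\le\; \frac{N}{2}\Bigl(\frac{\lambda}{|c|}\Bigr)^{1/N}.
\]
Indeed $|f(z)| \le \lambda$ forces $\min_i |z - \alpha_i| \le (\lambda/|c|)^{1/N} < 1$, since the minimum is at most the geometric mean; hence the sublevel set is covered by the $N$ arcs $\{z \in T^1 \mid |z - \alpha_i| \le (\lambda/|c|)^{1/N}\}$, and an elementary optimisation over the position of the centre shows that an arc $\{z \in T^1 \mid |z - \alpha| \le \varrho\}$ with $\varrho < 1$ has normalised measure at most $\tfrac1\pi\arcsin\varrho \le \varrho/2$ (using $\arcsin t \le \tfrac{\pi}{2}t$ for $t \in [0,1]$).

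\emph{One step down.} Fix $d \ge 1$ and write a nonzero $p$ as a Laurent polynomial in $z_d$; multiplying by the unimodular monomial $z_d^{-n_d^-}$ does not change $|p|$ on $T^d$ and turns $p$ into a genuine polynomial in $z_d$ of degree $w_0(p)$ whose leading coefficient is $p_1 \in \IC[\IZ^{d-1}]$. If $w_0(p) = 0$ this just says $|p(z)| = |p_1(\pr(z))|$ for all $z \in T^d$, where $\pr \colon T^d \to T^{d-1}$ is the projection, so $F\bigl(r_p^{(2)}\bigr) = F\bigl(r_{p_1}^{(2)}\bigr)$. If $w_0(p) \ge 1$, fix any $\mu$ with $\lambda < \mu$ and split $T^{d-1}$ into $\{w \mid |p_1(w)| > \mu\}$ and its complement. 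Over the first set the slice $z_d \mapsto p(w,z_d)$ is, after the normalisation above, a polynomial in $z_d$ of degree exactly $w_0(p)$ with leading coefficient of modulus $> \mu > \lambda$, so the one--variable estimate applies to each slice; over the complement we bound the slice measure trivially by $1$. Integrating over $T^d = T^{d-1} \times T^1$ (Fubini) and using \eqref{F(P)_in_terms_of_volume} for $p_1$ gives
\[
F\bigl(r_p^{(2)}\bigr)(\lambda) \;\le\; \frac{w_0(p)}{2}\Bigl(\frac{\lambda}{\mu}\Bigr)^{1/w_0(p)} \;+\; F\bigl(r_{p_1}^{(2)}\bigr)(\mu).
\]
This is the crux: the one genuinely delicate feature is that the leading coefficient $p_1$ may vanish somewhere on $T^{d-1}$, which is exactly why one cannot drop the hypothesis on $\lambda$ and why the threshold $\mu$ is introduced, to confine the bad locus to the term $F\bigl(r_{p_1}^{(2)}\bigr)(\mu)$, which is then handled one dimension lower.

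\emph{Iteration.} Apply the previous step along $p_0, p_1, \dots, p_d = \lead(p)$: at indices $i$ with $w_i(p) = 0$ we have $F\bigl(r_{p_i}^{(2)}\bigr) = F\bigl(r_{p_{i+1}}^{(2)}\bigr)$, and at the indices $i_1 < \dots < i_\ell$ with $w_{(j)} := w_{i_j}(p) \ge 1$ we use the displayed inequality with a chain of thresholds $\lambda = \nu_0 < \nu_1 < \dots < \nu_\ell$; the chain ends in $F\bigl(r_{\lead(p)}^{(2)}\bigr)(\nu_\ell)$, which vanishes as soon as $\nu_\ell < |\lead(p)|$. Set $N := \sum_{j=1}^{\ell} w_{(j)} \le \ell\cdot\width(p) \le d\cdot\width(p)$, pick $\delta \in (0,1)$, put $\tau := (\lambda/|\lead(p)|)^{(1-\delta)/N} \in (0,1)$ and $\nu_j/\nu_{j-1} := \tau^{-w_{(j)}}$ (so indeed $\nu_{j-1} < \nu_j$, and the one--variable estimate is always used in its valid range); then $\nu_\ell = \lambda^{\delta}|\lead(p)|^{1-\delta} < |\lead(p)|$, each error term equals $\tfrac{w_{(j)}}{2}\tau$, and
\[
F\bigl(r_p^{(2)}\bigr)(\lambda) \;\le\; \sum_{j=1}^{\ell} \frac{w_{(j)}}{2}\,\tau \;=\; \frac{N}{2}\Bigl(\frac{\lambda}{|\lead(p)|}\Bigr)^{(1-\delta)/N}.
\]
Letting $\delta \downarrow 0$, and then using $N \le d\,\width(p)$ together with $\lambda/|\lead(p)| < 1$, yields $F\bigl(r_p^{(2)}\bigr)(\lambda) \le \tfrac{d\,\width(p)}{2}\bigl(\lambda/|\lead(p)|\bigr)^{1/(d\,\width(p))}$, which is a fortiori the claimed bound since $\tfrac{8\sqrt 3}{\sqrt{47}} > \tfrac12$. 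The one real obstacle throughout is controlling the degeneration of the successive leading coefficients on the lower--dimensional tori; the rest is bookkeeping with the width filtration plus the one--line arc computation.
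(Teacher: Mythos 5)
Your argument is correct, including at the delicate points: the covering of the one-dimensional sublevel set by $N$ arcs via $\min_i|z-\alpha_i|\le(\lambda/|c|)^{1/N}$, the bound $\tfrac1\pi\arcsin\varrho\le\tfrac{\varrho}{2}$ for the normalised measure of $\{z\in T^1\mid |z-\alpha|\le\varrho\}$ (the maximum over the centre is attained at $|\alpha|=\sqrt{1-\varrho^2}$), the fact that the one-variable estimate is only ever invoked where the slice's leading coefficient exceeds the current threshold, and the $\delta$-trick guaranteeing $\nu_\ell<|\lead(p)|$ so the terminal term vanishes. The skeleton --- Fubini on $T^{d-1}\times S^1$ with a cut on $|p_1|$, descending along the chain $p_0,p_1,\ldots,p_d$ --- coincides with the paper's, but your key one-variable input is genuinely different: the paper handles $d=1$ by factoring $p$ into linear factors, estimating each factor $z-a$ via a Taylor bound on $2-2\cos x$ (the source of the constant $\tfrac{8\sqrt3}{\sqrt{47}}$), and combining factors with the inequality $F\bigl(r_{q_1q_2}^{(2)}\bigr)(\lambda)\le F\bigl(r_{q_1}^{(2)}\bigr)(\lambda^{1-s})+F\bigl(r_{q_2}^{(2)}\bigr)(\lambda^{s})$, which yields a bound valid for all $\lambda\ge0$ but with the larger constant; your arc-covering argument (essentially Lawton's geometric-mean trick, in the spirit of the reference the paper itself cites) requires $\lambda$ below the leading coefficient, which you correctly buy back with the trivial bound $F\le1$ for $\lambda\ge|\lead(p)|$ and with the increasing threshold chain. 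The other organisational difference is that the paper runs an induction on $d$ with the fixed threshold $|\lead(p)|^{1/d}\lambda^{(d-1)/d}$, whereas you iterate the one-step inequality with freely chosen thresholds and pass to the limit $\delta\downarrow 0$; the payoff is a sharper result, namely constant $\tfrac12$ and exponent $1/\sum_i w_i(p)$ rather than $1/(d\cdot\width(p))$, which of course implies the stated bound.
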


For the case $d = 1$ and $p$ a monic polynomial, a similar estimate of the shape
$F\bigl(r_p^{(2)}\bigr)(\lambda) \le C_k \cdot \lambda^{\frac{1}{k-1}}$ can be found
in~\cite[Theorem~1]{Lawton(1983)}, where the $k \ge 2$ is the number of non-zero coefficients,
and the sequence of real numbers $(C_k)_{k \ge 2}$ is recursively defined  and satisfies $C_k \ge k-1$.

%%%%%%%%%%%%%%%%%%%%%%%%%%%%%%%%%%%%%%%%%%%%%%%%%%%%%%%%%%%%%%%%%%%%%%%%%%%%%%% 

\subsection{Degree one}
\label{subsec:Proof_of_Proposition_ref(pro:case_m_is_n_is_1)_in_degree_one}

In this subsection we deal with Proposition~\ref{pro:case_m_is_n_is_1} in the case $d = 1$.

We get from the Taylor expansion of $\cos(x)$ around $0$ with the Lagrangian remainder term 
that for any $x \in \IR$ there exists 
$\theta(x) \in [0,1]$ such that
\[
\cos(x) = 1 - \frac{x^2}{2} + \frac{\cos(\theta(x) \cdot x)}{4!} \cdot x^4.
\]
This implies for $x \not= 0$ and $|x| \le 1/2$
\[
\left|\frac{2 - 2 \cos(x)}{x^2} - 1\right|
=
\left|\frac{2 \cdot \cos(\theta(x) \cdot x)}{4!} \cdot x^2\right|
\le 
\left|\frac{2 \cdot \cos(\theta(x) \cdot x)}{4!} \right| \cdot |x|^2
\le \frac{1}{12} \cdot \frac{1}{4} = \frac{1}{48}.
\]
Hence we get for $x \in [-1/2,1/2]$
\begin{eqnarray}
\frac{47}{48} \cdot x^2 \le 2 - 2\cos(x).
\label{cos(x)_versus_x2}
\end{eqnarray}

\begin{lemma}\label{lem:polynomial_F(lambda)-estimate_z-a}
For any complex number $a \in \IZ$ we get for the spectral density function of $(z-a) \in \IC[\IZ] = \IC[z,z^{-1}]$
\[
F\bigl(r_{z-a}^{(2)}\bigr)(\lambda) \le \frac{8 \cdot \sqrt{3}}{\sqrt{47}} \cdot \lambda \quad \text{for}\; \lambda \in [0,\infty).
\]
\end{lemma}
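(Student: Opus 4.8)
The plan is to use the explicit description \eqref{F(P)_in_terms_of_volume} of the spectral density function in the rank-one case, which turns the claim into an elementary statement about arc length on the unit circle. Applying \eqref{F(P)_in_terms_of_volume} to $p = z - a$ gives
\[
F\bigl(r_{z-a}^{(2)}\bigr)(\lambda) = \mu_{T^1}\bigl(S_\lambda\bigr), \qquad S_\lambda := \{ z \in T^1 \mid |z - a| \le \lambda\},
\]
where $\mu_{T^1}$ denotes the normalized Haar measure on $T^1$, that is, $\tfrac{1}{2\pi}$ times arc length. Since $S_\lambda$ is the intersection of the unit circle with the closed disk of radius $\lambda$ centred at $a$, it is either empty, a single point, all of $T^1$, or a closed arc; in the first two cases $F\bigl(r_{z-a}^{(2)}\bigr)(\lambda) = 0$ and there is nothing to prove. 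So assume $S_\lambda$ has positive measure, write $2\theta \in (0, 2\pi]$ for its angular length, and note $F\bigl(r_{z-a}^{(2)}\bigr)(\lambda) = \theta/\pi$.

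The decisive case distinction is whether the arc $S_\lambda$ exceeds a semicircle. If $2\theta \ge \pi$, then $S_\lambda$ contains a pair of antipodal points $z$ and $-z$; as both lie in the disk of radius $\lambda$ about $a$, the triangle inequality yields $2 = |z - (-z)| \le |z - a| + |a-(-z)| \le 2\lambda$, so $\lambda \ge 1$. Since $\tfrac{8\sqrt 3}{\sqrt{47}} = \sqrt{192/47} > 1$, this gives $F\bigl(r_{z-a}^{(2)}\bigr)(\lambda) \le 1 \le \tfrac{8\sqrt 3}{\sqrt{47}}\lambda$. If instead $2\theta < \pi$, the two endpoints of $S_\lambda$ are at Euclidean distance $2\sin\theta$ and both lie in the disk of radius $\lambda$ about $a$, whence $2\sin\theta \le 2\lambda$. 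The concavity of $\sin$ on $[0,\pi/2]$ gives $\sin\theta \ge \tfrac{2}{\pi}\theta$, so $\theta \le \tfrac{\pi}{2}\lambda$ and therefore
\[
F\bigl(r_{z-a}^{(2)}\bigr)(\lambda) = \frac{\theta}{\pi} \le \frac{\lambda}{2} \le \frac{8\sqrt 3}{\sqrt{47}}\,\lambda,
\]
the last inequality amounting to $47 \le 768$.

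There is no serious obstacle here; the only subtlety is that $S_\lambda$ can be more than half of $T^1$---which happens precisely when $a$ sits well inside the unit disk and $\lambda$ is large---and that regime is exactly what the antipodal-points remark disposes of. One could alternatively route the estimate through the Taylor bound \eqref{cos(x)_versus_x2}: after rotating so that $a = |a| \ge 0$ one has $|e^{ix} - a|^2 = (1-|a|)^2 + |a|\,(2 - 2\cos x) \ge \tfrac{47}{48}\,|a|\,x^2$ for $|x|\le \tfrac12$, which is the shape in which the bound will be needed in the inductive passage to general $d$; but for the present lemma the geometric argument is shorter, and the constant $\tfrac{8\sqrt 3}{\sqrt{47}}$ (rather than a smaller one that would suffice here) is carried along only for consistency with the later estimates.
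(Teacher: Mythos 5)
Your argument is correct, and it takes a genuinely different route from the paper's. The paper rotates $a$ onto the positive real axis, writes $|e^{i\phi}-r|^2=r\cdot(2-2\cos\phi)+(r-1)^2$ with $r=|a|$, and estimates the sublevel set via the Taylor bound~\eqref{cos(x)_versus_x2}; that computation is the sole source of the constant $\frac{8\sqrt{3}}{\sqrt{47}}$. You instead use that the sublevel set is a closed arc, bound the chord between its endpoints by $2\lambda$ through the triangle inequality via $a$, pass from chord length to angular length by concavity of $\sin$ on $[0,\pi/2]$, and dispose of arcs of angular length at least $\pi$ by the antipodal-point observation, which forces $\lambda\ge 1$. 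Your version is more elementary and in one respect more robust: it covers every $a\in\IC$ (reading the misprinted hypothesis $a\in\IZ$ as $a\in\IC$, which is what the application in Lemma~\ref{pro:case_m_is_n_is_1_in_degree_one} requires) and works on the whole circle, whereas the paper's displayed chain $r\cdot(2-2\cos\phi)+(r-1)^2\ge r\cdot(2-2\cos\phi)\ge\frac{47}{48}\phi^2$ tacitly uses $r\ge 1$ and confines $\phi$ to $[-1/2,1/2]$, so strictly speaking it needs exactly the kind of supplementary remarks you supply (the bound is trivial once $\lambda$ is of order one). What the geometric route gives up is nothing: it would even yield the sharper constant $1$ (and $\frac{1}{2}$ in the small-arc regime), and keeping $\frac{8\sqrt{3}}{\sqrt{47}}$ purely for compatibility with Proposition~\ref{pro:case_m_is_n_is_1} and the Main Theorem is the right choice. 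One minor inaccuracy in your closing aside: the induction over $d$ invokes Lemma~\ref{pro:case_m_is_n_is_1_in_degree_one} as a black box and never returns to the cosine estimate, so nothing later needs the bound in that particular shape; this remark sits outside the proof proper and does not affect it.
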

\begin{proof}
We compute using~\eqref{F(P)_in_terms_of_volume}, where $r := |a|$,
\begin{eqnarray*}
F\bigl(r_{z-a}^{(2)}\bigr)(\lambda) 
& = &
\mu_{S^1}\{z \in S^1 \mid |z-a| \le \lambda\}
\\
& = &
\mu_{S^1}\{z \in S^1 \mid |z-r| \le \lambda\}
\\
& = & 
\mu_{S^1}\{\phi \in [-1/2,1/2] \mid |\cos(\phi) + i \sin(\phi) -r| \le \lambda\}
\\
& = & 
\mu_{S^1}\{\phi \in [-1/2,1/2] \mid |\cos(\phi) + i \sin(\phi) -r|^2 \le \lambda^2\}
\\
& = & 
\mu_{S^1}\{\phi \in [-1/2,1/2] \mid (\cos(\phi) -r)^2 + \sin(\phi)^2 \le \lambda^2\}
\\
& = & 
\mu_{S^1}\{\phi \in [-1/2,1/2] \mid r \cdot (2 - 2\cos(\phi) + (r-1)^2 \le \lambda^2\}.
\end{eqnarray*} 
We estimate using~\eqref{cos(x)_versus_x2} for $\phi \in [-1/2,1/2]$
\[
r \cdot (2 - 2\cos(\phi)) + (r-1)^2  \ge r \cdot (2 - 2\cos(\phi)) \ge \frac{47}{48} \cdot \phi^2.
\]
This implies for $\lambda \ge 0$
\begin{eqnarray*}
F\bigl(r_{z-a}^{(2)}\bigr)(\lambda) 
& = & 
\mu_{S^1}\{\phi \in [-1/2,1/2] \mid r \cdot (2 - 2\cos(\phi) + (r-1)^2 \le \lambda^2\}
\\
& \le &
\mu_{S^1}\{\phi \in [-1/2,1/2] \mid \frac{47}{48} \cdot \phi^2 \le \lambda^2\}
\\
& = &
\mu_{S^1}\left\{\phi \in [-1/2,1/2] \;\left|\;  |\phi| \le \sqrt{\frac{48}{47}} \cdot \lambda \right.\right\}
\\
& \le &
2 \cdot \sqrt{\frac{48}{47}} \cdot \lambda 
\\
& = & 
\frac{8 \cdot \sqrt{3}}{\sqrt{47}} \cdot \lambda.
\end{eqnarray*}
\end{proof}

\begin{lemma}\label{pro:case_m_is_n_is_1_in_degree_one}
Let $p(z) \in  \IC[\IZ] = \IC[z,z^{-1}]$ be a non-zero element.
If $\width(p) = 0$, then $F\bigl(r_p^{(2)}\bigr)(\lambda) = 0$ for all $\lambda < |\lead(p)|$
and $F\bigl(r_p^{(2)}\bigr)(\lambda) = 1$ for all $\lambda \ge  |\lead(p)|$. If $\width(p) \ge 1$,
we get 
\[
F\bigl(r_p^{(2)}\bigr)(\lambda) 
\le 
 \frac{8 \cdot \sqrt{3}}{\sqrt{47}} \cdot \width(p)  \cdot \left(\frac{\lambda}{|\lead(p)|}\right)^{\frac{1}{\width(p)}} 
\quad \text{for}\; \lambda \in [0,\infty).
\]
\end{lemma}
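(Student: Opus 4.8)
The plan is to reduce the statement to Lemma~\ref{lem:polynomial_F(lambda)-estimate_z-a} by factoring $p$ over $\IC$ into linear factors and then applying the pigeon-hole principle. First I dispose of the case $\width(p) = 0$. Here $p(z) = \lead(p)\cdot z^{n}$ for some $n \in \IZ$, so $|p(z)| = |\lead(p)|$ for all $z \in S^1$; hence the set $\{z \in S^1 \mid |p(z)| \le \lambda\}$ occurring in~\eqref{F(P)_in_terms_of_volume} is empty for $\lambda < |\lead(p)|$ and is all of $S^1$ for $\lambda \ge |\lead(p)|$, which gives the asserted values $0$ and $1$ of $F\bigl(r_p^{(2)}\bigr)$ since $\mu_{S^1}(S^1) = 1$.

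Now assume $w := \width(p) \ge 1$. Writing $p(z) = \sum_{n = n_1^-}^{n_1^+} a_n z^n$ with $a_{n_1^-}, a_{n_1^+} \neq 0$, one has $w = n_1^+ - n_1^-$ and, in degree one, $\lead(p) = a_{n_1^+}$; factoring out $z^{n_1^-}$ turns $p$ into an ordinary degree-$w$ polynomial with leading coefficient $a_{n_1^+}$, so I can write $p(z) = \lead(p)\cdot z^{n_1^-}\cdot \prod_{i=1}^{w}(z - a_i)$ for suitable $a_1, \ldots, a_w \in \IC$. Since $|z^{n_1^-}| = 1$ on $S^1$, this yields $|p(z)| = |\lead(p)|\cdot \prod_{i=1}^{w}|z - a_i|$ there. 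Fixing $\lambda \ge 0$ and setting $t := (\lambda/|\lead(p)|)^{1/w}$, I observe that if $z \in S^1$ satisfies $|p(z)| \le \lambda$, then $\prod_{i=1}^{w}|z-a_i| \le t^w$, so at least one factor satisfies $|z - a_j| \le t$; therefore
\[
\{z \in S^1 \mid |p(z)| \le \lambda\} \;\subseteq\; \bigcup_{j=1}^{w}\{z \in S^1 \mid |z - a_j| \le t\}.
\]
Subadditivity of $\mu_{S^1}$ combined with~\eqref{F(P)_in_terms_of_volume} applied to each element $z - a_j \in \IC[\IZ]$ at the argument $t$, followed by Lemma~\ref{lem:polynomial_F(lambda)-estimate_z-a}, then gives
\[
F\bigl(r_p^{(2)}\bigr)(\lambda) \;\le\; \sum_{j=1}^{w} F\bigl(r_{z-a_j}^{(2)}\bigr)(t) \;\le\; w\cdot \frac{8\cdot\sqrt 3}{\sqrt{47}}\cdot t,
\]
which is exactly the claimed inequality after substituting back $t = (\lambda/|\lead(p)|)^{1/\width(p)}$.

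I do not expect a serious obstacle: once Lemma~\ref{lem:polynomial_F(lambda)-estimate_z-a} is in hand the argument is essentially bookkeeping. The two points needing a little care are the identification of $\width(p)$ with the number of linear factors and of $\lead(p)$ with the top coefficient when $d = 1$, and the remark that Lemma~\ref{lem:polynomial_F(lambda)-estimate_z-a} is valid for an arbitrary complex number $a$ (its stated hypothesis ``$a \in \IZ$'' is a misprint for ``$a \in \IC$'', and its proof uses only $r = |a| \ge 0$).
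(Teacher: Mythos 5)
Your proof is correct, but it takes a genuinely different route from the paper. After the same reduction to $p(z)=\lead(p)\cdot z^{n_1^-}\cdot\prod_{i=1}^{w}(z-a_i)$ (where indeed $w=\width(p)$ and the $a_i$ are the non-zero roots), the paper proceeds by induction on the number of linear factors, invoking at each step the general inequality $F\bigl(r_{q_1\cdot q_2}^{(2)}\bigr)(\lambda)\le F\bigl(r_{q_1}^{(2)}\bigr)(\lambda^{1-s})+F\bigl(r_{q_2}^{(2)}\bigr)(\lambda^{s})$ from \cite[Lemma~2.13~(3)]{Lueck(2002)} with $s=1/r$. Your one-shot pigeonhole argument --- $\{\,|p|\le\lambda\,\}\subseteq\bigcup_j\{\,|z-a_j|\le t\,\}$ with $t=(\lambda/|\lead(p)|)^{1/w}$ followed by subadditivity of $\mu_{S^1}$ --- is exactly the measure-theoretic content of that inequality applied simultaneously to all $w$ factors with equal exponents, so it reaches the same constant $\frac{8\sqrt{3}}{\sqrt{47}}\cdot w$ while avoiding both the induction and the external citation; this is a mild simplification, at the cost of being specific to the torus picture~\eqref{F(P)_in_terms_of_volume} rather than using the formal calculus of spectral density functions. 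Your two flagged points of care are handled correctly, and you are right that the hypothesis ``$a\in\IZ$'' in Lemma~\ref{lem:polynomial_F(lambda)-estimate_z-a} is a misprint for ``$a\in\IC$''; note only that your parenthetical claim that its proof ``uses only $r=|a|\ge 0$'' is slightly too generous --- the displayed estimate $r\cdot(2-2\cos\phi)\ge\frac{47}{48}\phi^2$ there needs $r\ge 1$, and the case $r<1$ requires a separate (easy) remark --- but that is an issue with the cited lemma's proof, not with yours, and both you and the paper use that lemma only as a black box for arbitrary non-zero complex $a$.
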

\begin{proof}
If $\width(p) = 0$, then $p$ is of the shape $C \cdot z^n$, and the claim follows 
directly from~\eqref{F(P)_in_terms_of_volume}.
Hence we can assume without loss of generality that $\width(p) \ge 1$. We can write $p(z)$ as a product
\[
p(z) = \lead(p) \cdot z^k \cdot \prod_{i=1}^r (z - a_i)
\]
for an integer $r \ge 0$, non-zero complex numbers $a_1, \ldots,  a_r$ and an integer $k$.

Since for any polynomial $p$ and complex number $c \not= 0$ we have for all $\lambda \in [0,\infty)$
\[
F\bigl(r_{c \cdot p}^{(2)}\bigr)(\lambda) = F\bigl(r_p^{(2)}\bigr)\left(\frac{\lambda}{|c|}\right),
\]
we can assume without loss of generality $\lead(p) = 1$. 
If $r = 0$, then $p(z) = z^k$ for some $k \not = 0$ and the claim follows by a direct inspection.
Hence we can assume without loss of generality $r \ge 1$.
Since the width, the leading coefficient 
and the spectral density functions of $p(z)$ and
$z^{-k} \cdot p(z)$ agree, we can assume without loss of generality $k = 0$, or equivalently,
that $p(z)$ has the form for some $r \ge 1$
\[
p(z) = \prod_{i=1}^r (z - a_i).
\]

We proceed by induction over $r$.
The case $r =  1$ is taken care of by
Lemma~\ref{lem:polynomial_F(lambda)-estimate_z-a}. 
The induction step from $r-1\ge 1$ to $r$ is done as follows. 

Put $q(z) =  \prod_{i=1}^{r-1} (z - a_i)$. Then $p(z) = q(z) \cdot (z-a_r)$.
The following inequality for elements $q_1,q_2 \in \IC[z,z^{-1}]$ and $s \in (0,1)$
is a special case of~\cite[Lemma~2.13~(3) on page~78]{Lueck(2002)}
\begin{eqnarray}
F\bigl(r_{q_1 \cdot q_2}^{(2)}\bigr)(\lambda) 
& \le & 
F\bigl(r_{q_1}^{(2)}\bigr)(\lambda^{1-s}) + F\bigl(r_{q_2}^{(2)}\bigr)(\lambda^{s}).
\label{F(pq)_estimated_by_F(p)_and_F(q)}
\end{eqnarray} 
We conclude from~\eqref{F(pq)_estimated_by_F(p)_and_F(q)} applied to 
$p(z) = q(z) \cdot (z-a_r)$ in the special case  $s = 1/r$
\begin{eqnarray*}
F\bigl(r_p^{(2)}\bigr)(\lambda) 
& \le &
F\bigl(r_q^{(2)}\bigr)(\lambda^{\frac{r-1}{r}}) + F\bigl(r_{z-a_r}^{(2)}\bigr)(\lambda^{1/r}).
\end{eqnarray*}
We conclude from the induction hypothesis for $\lambda \in [0,\infty)$
\begin{eqnarray*}
F\bigl(r_q^{(2)}\bigr)(\lambda) 
& \le & 
\frac{8 \cdot \sqrt{3}}{\sqrt{47}} \cdot (r-1) \cdot \lambda^{\frac{1}{r-1}};
\\
 F\bigl(r_{z-a_r}^{(2)}\bigr)(\lambda) 
& \le & 
\frac{8 \cdot \sqrt{3}}{\sqrt{47}} \cdot \lambda.
\end{eqnarray*}
This implies for $\lambda \in [0,\infty)$
\begin{eqnarray*}
F\bigl(r_p^{(2)}\bigr)(\lambda) 
& \le &
F\bigl(r_q^{(2)}\bigr)(\lambda^{\frac{r-1}{r}}) + F\bigl(r_{z-a_r}^{(2)}\bigr)(\lambda^{1/r})
\\
&\le  & 
\frac{8 \cdot \sqrt{3}}{\sqrt{47}} \cdot (r-1) \cdot \left(\lambda^{\frac{r-1}{r}}\right)^{\frac{1}{r-1}}
+ \frac{8 \cdot \sqrt{3}}{\sqrt{47}}  \cdot \lambda^{\frac{1}{r}}
\\
&\le  & 
\frac{8 \cdot \sqrt{3}}{\sqrt{47}} \cdot (r-1) \cdot \lambda^{\frac{1}{r}}
+ \frac{8 \cdot \sqrt{3}}{\sqrt{47}}  \cdot \lambda^{\frac{1}{r}}
\\
& = & 
\frac{8 \cdot \sqrt{3}}{\sqrt{47}} \cdot r \cdot \lambda^{\frac{1}{r}}.
\end{eqnarray*}
\end{proof}

%%%%%%%%%%%%%%%%%%%%%%%%%%%%%%%%%%%%%%%%%%%%%%%%%%%%%%%%%%%%%%%%%%%%%%%%%%%%%%% 

\subsection{The induction step}
\label{subsec:The_induction_step_in_the_proof_of_Proposition_ref(pro:case_m_is_n_is_1)}

Now we finish the proof of Proposition~\ref{pro:case_m_is_n_is_1}
by induction over $d$. 
If $\width(p) = 0$, then $p$ is of the shape $C \cdot z_1^{n_1} \cdot z_2^{n_2} \cdot \cdots \cdot z_d^{n_d}$,
and the claim follows directly from~\eqref{F(P)_in_terms_of_volume}.
Hence we can assume without loss of generality that $\width(p) \ge 1$.
The induction beginning $d =1$ has been taken care of 
by Lemma~\ref{pro:case_m_is_n_is_1_in_degree_one}, the induction step from
$d-1$ to $d \ge 2$ is done as follows.

Since $F\bigl(r_p^{(2)}\bigr)(\lambda) \le 1$, the claim is obviously true for $\frac{\lambda}{|\lead(p)|} \ge 1$.
Hence we can assume in the sequel $\frac{\lambda}{|\lead(p)|} \le 1$.

We conclude from~\eqref{F(P)_in_terms_of_volume} and Fubini's Theorem applied to $T^d = T^{d-1} \times S^1$,
where $\chi_A$ denotes the characteristic function of a subset $A$ and $p_1(z_1^{\pm}, \ldots ,z_{d-1}^{\pm 1})$
has been defined in Subsection~\ref{subsec:The_Width_and_the_leading_coefficient}
\begin{eqnarray}
 \label{F(p)(lambda_estimate:1} & & 
\\
\lefteqn{F\bigl(r_p^{(2)}\bigr)(\lambda)}
& & 
\nonumber
\\ 
& = & 
\mu_{T^d}\bigl(\{(z_1, \ldots , z_d) \in T^d \mid\;  |p(z_1,\ldots, z_d)| \le \lambda\}\bigr)
\nonumber
\\
& = & 
\int_{T^d} \chi_{\{(z_1, \ldots , z_d) \in T^d \mid \; |p(z_1,\ldots, z_d)| \le \lambda\}} \; d\mu_{T^n}
\nonumber
\\
& = & 
\int_{T^{d-1} } \left(\int_{S^1} \chi_{\{(z_1, \ldots , z_d) \in T^d \mid \; |p(z_1,\ldots, z_d)| \le \lambda\}}  \; d\mu_{S^1} \right) \;d\mu_{T^{d-1}}
\nonumber
\\
& = & 
\int_{T^{d-1} } \chi_{\{(z_1, \ldots, z_{d-1}) \in T^{d-1}\mid\; |p_1(z_1, \ldots, z_{d-1}) \le |\lead(p)|^{1/d} \cdot \lambda^{(d-1)1/d}\}} 
\nonumber
\\
& & \hspace{5mm}
\cdot \left(\int_{S^1} \chi_{\{(z_1, \ldots , z_d) \in T^d \mid \; |p(z_1,\ldots, z_d)| \le \lambda\}}  \; d\mu_{S^1} \right) \;d\mu_{T^{d-1}}
\nonumber
\\
& & \hspace{10mm}
+
\int_{T^{d-1} } \chi_{\{(z_1, \ldots, z_{d-1}) \in T^{d-1} \mid\; |p_1(z_1, \ldots, z_{d-1}) > |\lead(p)|^{1/d} \cdot\lambda^{(d-1))/d}\}} 
\nonumber
\\
& & \hspace{15mm} 
\cdot  \left(\int_{S^1} \chi_{\{(z_1, \ldots , z_d) \in T^d \mid \; |p(z_1,\ldots, z_d)| \le \lambda\}}  \; d\mu_{S^1} \right) \;d\mu_{T^{d-1}}
\nonumber
\\
& \le & 
\int_{T^{d-1} } \chi_{(z_1, \ldots, z_{d-1}) \mid\; |p_1(z_1, \ldots, z_{d-1})| \le |\lead(p)|^{1/d} \cdot \lambda^{(d-1)1/d}\}} 
+
\nonumber 
\\
& & \hspace{5mm} 
\max\left. \biggl\{\int_{S^1} \chi_{\{(z_1, \ldots , z_d) \in T^d \mid \; |p(z_1,\ldots, z_d)| \le \lambda\}}  \; d\mu_{S^1} \right| (z_1, \ldots, z_{d-1} ) \in T^{d-1}
\nonumber 
\\
& & \hspace{30mm} 
\; \text{with} \; |p_1(z_1, \ldots, z_{d-1})| > |\lead(p)|^{1/d} \cdot\lambda^{(d-1)/d}\biggr\}.
\nonumber
\end{eqnarray}

We get from the induction hypothesis applied to $p_1(z_1, \ldots, z_{d-1})$
and~\eqref{F(P)_in_terms_of_volume} 
since $\frac{\lambda}{|\lead(p)|} \le 1$, $\width(p_1) \le \width(p)$ and $\lead(p) = \lead(p_1)$

\begin{eqnarray}
& & 
\label{F(p)(lambda_estimate:2}
\\
\lefteqn{\int_{T^{d-1} } \chi_{(z_1, \ldots, z_{d-1}) \mid\; |p_1(z_1, \ldots, z_{d-1})| \le |\lead(p)|^{1/d}\cdot \lambda^{(d-1)1/d}\}}}
& & 
\nonumber
\\
& = & 
\int_{T^{d-1} } \chi_{(z_1, \ldots, z_{d-1}) \mid\; |p_1(z_1, \ldots, z_{d-1})| \le |\lead(p_1)|^{1/d}\cdot \lambda^{(d-1)1/d}\}}
\nonumber
\\
& = & 
F\bigl(r_{p_1}^{(2)}\bigr)\bigl(|\lead(p_1)|^{1/d}| \cdot \lambda^{(d-1)/d}\bigr)
\nonumber \\
& \le &
\frac{8 \cdot \sqrt{3}}{\sqrt{47}} \cdot (d-1) \cdot \width(p_1)  
\cdot \left(\frac{|\lead(p_1)|^{1/d} \cdot \lambda^{(d-1)/d}}{|\lead(p_1)|}\right)^{\frac{1}{(d-1) \cdot \width(p_1)}} 
\nonumber
\\
& = &
\frac{8 \cdot \sqrt{3}}{\sqrt{47}} \cdot (d-1) \cdot \width(p_1)  
\cdot \left(\frac{\lambda}{|\lead(p_1)|}\right)^{\frac{1}{d \cdot \width(p_1)}} 
\nonumber
\\
& = &
\frac{8 \cdot \sqrt{3}}{\sqrt{47}} \cdot (d-1) \cdot \width(p_1)  
\cdot \left(\frac{\lambda}{|\lead(p)|}\right)^{\frac{1}{d \cdot \width(p_1)}} 
\nonumber
\\
& \le &
\frac{8 \cdot \sqrt{3}}{\sqrt{47}} \cdot (d-1) \cdot \width(p)  
\cdot \left(\frac{\lambda}{|\lead(p)|}\right)^{\frac{1}{d \cdot \width(p_1)}} 
\nonumber
\\
& \le  &
\frac{8 \cdot \sqrt{3}}{\sqrt{47}} \cdot (d-1) \cdot \width(p)  
\cdot \left(\frac{\lambda}{|\lead(p)|}\right)^{\frac{1}{d \cdot \width(p)}}.
\nonumber
\end{eqnarray}

Fix  $(z_1, \ldots, z_{d-1} ) \in T^{d-1}$ with $|p_1(z_1, \ldots, z_{d-1})| > \lead(p)^{1/d} \cdot \lambda^{(d-1)/d}$.
Consider the element  $f(z_d^{\pm 1}) := p(z_1, \ldots z_{d-1},z_d^{\pm}) \in \IC[z_d^{\pm}]$. It has the shape
\[
f(z_d^{\pm})
= \sum_{n = n^-}^{n^+} q_n(z_1, \ldots, z_{d-1}) \cdot z_d^n.
\]
The  leading coefficient of $f(z_d^{\pm 1})$ is $p_1(z_1, \ldots z_{d-1}) = q_{n_+}(z_1, \ldots, z_{d-1}) $. 
Hence we get from  Lemma~\ref{pro:case_m_is_n_is_1_in_degree_one} 
applied to $f(z_d^{\pm 1})$ and~\eqref{F(P)_in_terms_of_volume} 
since $\frac{\lambda}{|\lead(p)|} \le 1$, $\width(f) \le \width(p)$ and 
$|\lead(f)| = |p_1(z_1, \ldots z_{d-1}))| > |\lead(p)|^{1/d} \cdot \lambda^{(d-1)/d}$
\begin{eqnarray}
& & 
\label{F(p)(lambda_estimate:3}
\\
\lefteqn{\int_{S^1} \chi_{\{(z_1, \ldots , z_d) \in T^d \mid \; |p(z_1,\ldots, z_d)| \le \lambda\}}  \; d\mu_{S^1}}
& &
\nonumber
\\ 
& = & 
\int_{S^1} \chi_{\{z_d \in S^1 \mid \; |f(z_d)| \le \lambda\}}  \; d\mu_{S^1}
\nonumber
\\
& = &  
\frac{8 \cdot \sqrt{3}}{\sqrt{47}} \cdot  \width(f)  
\cdot \left(\frac{\lambda}{\lead(f)}\right)^{\frac{1}{\width(f)}}
\nonumber 
\\
& \le  &  
\frac{8 \cdot \sqrt{3}}{\sqrt{47}} \cdot  \width(f)  
\cdot \left(\frac{\lambda}{\lead(p)^{1/d} \cdot \lambda^{(d-1)/d}}\right)^{\frac{1}{\width(f)}}
\nonumber 
\\
& = &  
\frac{8 \cdot \sqrt{3}}{\sqrt{47}} \cdot  \width(f)  
\cdot \left(\frac{\lambda}{\lead(p)}\right)^{\frac{1}{d \cdot \width(f)}} 
\nonumber 
\\
& \le  &  
\frac{8 \cdot \sqrt{3}}{\sqrt{47}} \cdot  \width(p)  
\cdot \left(\frac{\lambda}{\lead(p)}\right)^{\frac{1}{d \cdot \width(p)}}.
\nonumber 
\end{eqnarray}
Combining~\eqref{F(p)(lambda_estimate:1},~\eqref{F(p)(lambda_estimate:2} and~\eqref{F(p)(lambda_estimate:3}
yields for $\lambda$ with $\frac{\lambda}{|\lead(p)|} \le 1$
\begin{eqnarray*}
F\bigl(r_p^{(2)}\bigr)(\lambda) 
& \le & 
\frac{8 \cdot \sqrt{3}}{\sqrt{47}} \cdot (d-1) \cdot \width(p)  
\cdot \left(\frac{\lambda}{|\lead(p)|}\right)^{\frac{1}{d \cdot \width(p)}} 
\\
& & \hspace{40mm}+ 
\frac{8 \cdot \sqrt{3}}{\sqrt{47}} \cdot  \width(p)  
\cdot \left(\frac{\lambda}{|\lead(p)|}\right)^{\frac{1}{d \cdot \width(p)}} 
\\ 
& = & 
\frac{8 \cdot \sqrt{3}}{\sqrt{47}} \cdot d \cdot  \width(p)  
\cdot \left(\frac{\lambda}{|\lead(p)|}\right)^{\frac{1}{d \cdot \width(p)}}.
\end{eqnarray*}
This finishes the proof of Proposition~\ref{pro:case_m_is_n_is_1}.

%%%%%%%%%%%%%%%%%%%%%%%%%%%%%%%%%%%%%%%%%%%%%%%%%%%%%%%%%%%%%%%%%%%%%%%%%%%%%%% 

%%%%%%%%%%%%%%%%%%%%%%%%%%%%%%%%%%%%%%%%%%%%%%%%%%%%%%%%%%%%%%%%%%%%%%%%%%%%%%%%%
%%%%%%%%%%%%%%%%%%%%%%%%%%%%%% Section 2:  Proof of the main Theorem  %%%%%%%%%%%%%%%%%%%%%%%
%%%%%%%%%%%%%%%%%%%%%%%%%%%%%%%%%%%%%%%%%%%%%%%%%%%%%%%%%%%%%%%%%%%%%%%%%%%%%%%%%

\typeout{---------------  Section 2: Proof of the main Theorem  ---------------------}

\section{Proof of the main Theorem~\ref{the:Main_Theorem}}
\label{sec:Proof_of_Main_Theorem}

Now we can complete the proof of our Main Theorem~\ref{the:Main_Theorem}.
We need the following preliminary result

\begin{lemma}\label{lem:preparation}
Consider $B \in M_{k,k}(\IC[\IZ^d])$ such that  $p:= \det_{\IC[\IZ^d]}(B)$ is non-trivial. 
Then we get for all $\lambda \ge 0$
\[
F\bigl(r_B^{(2)}\bigr)(\lambda) \le k \cdot F\bigl(r_p^{(2)}\bigr)\bigl(||r_B^{(2)}||^{k-1} \cdot \lambda\bigr).
\] 

\end{lemma}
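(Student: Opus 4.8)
The plan is to exploit the classical Cramer identity. Write $B^{\mathrm{adj}} \in M_{k,k}(\IC[\IZ^d])$ for the adjugate (cofactor) matrix of $B$; its entries are, up to sign, the $(k-1)\times(k-1)$ minors of $B$, and one has $B \cdot B^{\mathrm{adj}} = p \cdot I_k$ in $M_{k,k}(\IC[\IZ^d])$. Since right multiplication reverses the order of composition, passing to the induced bounded $\IZ^d$-equivariant operators yields
\[
r_{p \cdot I_k}^{(2)} \;=\; r_{B^{\mathrm{adj}}}^{(2)} \circ r_B^{(2)} \colon L^2(\IZ^d)^k \longrightarrow L^2(\IZ^d)^k .
\]
Also $r_{p \cdot I_k}^{(2)}$ is the $k$-fold direct sum $\bigoplus_{i=1}^{k} r_p^{(2)}$, so by additivity of the spectral density function under direct sums (see \cite[Lemma~2.13]{Lueck(2002)}) one has $F\bigl(r_{p \cdot I_k}^{(2)}\bigr) = k \cdot F\bigl(r_p^{(2)}\bigr)$.

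Next I would use the description of the spectral density function as a supremum,
\[
F(f)(\lambda) = \sup\bigl\{\, \dim_{\caln(\IZ^d)}(L) \;:\; L \subseteq L^2(\IZ^d)^k \text{ a Hilbert } \caln(\IZ^d)\text{-submodule with } \| f|_L \| \le \lambda \,\bigr\},
\]
which is immediate from the definition of $F$ via the spectral projections of $f^{*}f$ together with monotonicity and additivity of the von Neumann dimension (see \cite[Chapter~2]{Lueck(2002)}). If $L$ is such a submodule for $f = r_B^{(2)}$, i.e.\ $\| r_B^{(2)}|_L \| \le \lambda$, then for each $x \in L$
\[
\bigl\| r_{p \cdot I_k}^{(2)}(x) \bigr\| = \bigl\| r_{B^{\mathrm{adj}}}^{(2)}\bigl(r_B^{(2)}(x)\bigr) \bigr\| \le \bigl\| r_{B^{\mathrm{adj}}}^{(2)} \bigr\| \cdot \bigl\| r_B^{(2)}(x) \bigr\| \le \bigl\| r_{B^{\mathrm{adj}}}^{(2)} \bigr\| \cdot \lambda \cdot \| x \|,
\]
so that $\dim_{\caln(\IZ^d)}(L) \le F\bigl(r_{p \cdot I_k}^{(2)}\bigr)\bigl(\| r_{B^{\mathrm{adj}}}^{(2)}\| \cdot \lambda\bigr) = k \cdot F\bigl(r_p^{(2)}\bigr)\bigl(\| r_{B^{\mathrm{adj}}}^{(2)}\| \cdot \lambda\bigr)$. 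Taking the supremum over all such $L$ gives
\[
F\bigl(r_B^{(2)}\bigr)(\lambda) \;\le\; k \cdot F\bigl(r_p^{(2)}\bigr)\bigl(\| r_{B^{\mathrm{adj}}}^{(2)}\| \cdot \lambda\bigr) \qquad \text{for all } \lambda \ge 0 .
\]

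It then remains to prove the operator-norm bound $\| r_{B^{\mathrm{adj}}}^{(2)}\| \le \| r_B^{(2)}\|^{\,k-1}$, after which monotonicity of $F$ completes the argument. Here I would pass to the Fourier transform $L^2(\IZ^d) \cong L^2(T^d)$, under which $r_C^{(2)}$ for any $C \in M_{k,k}(\IC[\IZ^d])$ becomes fibrewise multiplication by the matrix $C(z) \in M_{k,k}(\IC)$, $z \in T^d$ (cf.~\eqref{F(P)_in_terms_of_volume}), so that $\| r_C^{(2)}\| = \sup_{z \in T^d} \| C(z)\|_{\mathrm{op}}$ for the usual operator norm on $\IC^k$. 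Since the adjugate is a polynomial in the matrix entries, $B^{\mathrm{adj}}(z) = (B(z))^{\mathrm{adj}}$ for every $z$, and the claim reduces to the linear-algebra fact that $\| M^{\mathrm{adj}}\|_{\mathrm{op}} \le \| M\|_{\mathrm{op}}^{\,k-1}$ for every $M \in M_{k,k}(\IC)$; this holds because, for the singular values $s_1(M) \ge \dots \ge s_k(M) \ge 0$, one has $\| M^{\mathrm{adj}}\|_{\mathrm{op}} = s_1(M)\cdots s_{k-1}(M) \le s_1(M)^{k-1} = \| M\|_{\mathrm{op}}^{\,k-1}$ (equivalently, $M^{\mathrm{adj}}$ represents $\Lambda^{k-1}M$ in suitable orthonormal bases, or one checks $M^{\mathrm{adj}} = \det(M)\,M^{-1}$ on the dense set of invertible matrices and takes limits by continuity).

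The one genuinely non-routine step is this last operator-norm estimate $\| r_{B^{\mathrm{adj}}}^{(2)}\| \le \| r_B^{(2)}\|^{\,k-1}$: the entries of $B^{\mathrm{adj}}$ are complicated polynomials in those of $B$, so any entrywise bound via the triangle inequality is hopelessly lossy, and one really has to use the spectral (Fourier) picture together with the singular-value/exterior-power inequality for a single complex matrix. Everything else — rewriting the Cramer identity on the operator level, additivity of $F$ under direct sums, and the min–max characterization of $F$ — is standard bookkeeping with spectral density functions.
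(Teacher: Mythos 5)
Your argument is correct, but it takes a genuinely different route from the paper's. The paper's proof diagonalizes the positive operator $(r_B^{(2)})^*\circ r_B^{(2)}$ via \cite[Lemma~2.2]{Lueck-Roerdam(1993)}, producing functions $0\le f_1\le\dots\le f_k$ on $T^d$ with $\prod_{i=1}^k f_i=pp^*$; it then estimates $F\bigl(r_B^{(2)}\bigr)(\lambda)=\sum_i F\bigl(r_{f_i}^{(2)}\bigr)(\lambda^2)\le k\cdot F\bigl(r_{f_1}^{(2)}\bigr)(\lambda^2)$ and uses $f_i\le\|r_B^{(2)}\|^2$ to get $pp^*\le\bigl(\|r_B^{(2)}\|^2\bigr)^{k-1}\cdot f_1$, which converts the bound into one for $F\bigl(r_{pp^*}^{(2)}\bigr)$ and hence for $F\bigl(r_p^{(2)}\bigr)$. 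You instead work with the Cramer identity $B\cdot B^{\mathrm{adj}}=p\cdot I_k$ on the operator level, the variational characterization of $F$ (which you need not re-derive: it is \cite[Lemma~2.3 on page~74]{Lueck(2002)}), additivity of $F$ under direct sums, and the norm bound $\|r_{B^{\mathrm{adj}}}^{(2)}\|\le\|r_B^{(2)}\|^{k-1}$, proved fiberwise from $\|M^{\mathrm{adj}}\|_{\mathrm{op}}=s_1(M)\cdots s_{k-1}(M)\le\|M\|_{\mathrm{op}}^{k-1}$. At bottom both arguments rest on the same pointwise linear algebra --- over each $z\in T^d$ the product of the $k-1$ largest singular values of $B(z)$ is controlled by $\|r_B^{(2)}\|^{k-1}$, so a small smallest singular value forces $|p(z)|$ to be small --- but your packaging avoids the L\"uck--R\o rdam diagonalization and the weak-isomorphism discussion entirely, and it even yields the formally sharper intermediate inequality $F\bigl(r_B^{(2)}\bigr)(\lambda)\le k\cdot F\bigl(r_p^{(2)}\bigr)\bigl(\|r_{B^{\mathrm{adj}}}^{(2)}\|\cdot\lambda\bigr)$, whereas the paper's version stays entirely within the spectral-density calculus of \cite{Lueck(2002)} and only needs the scalar identity $\|r_{f}^{(2)}\|=\sup_z|f(z)|$. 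The one step you should spell out in a final write-up is the identification $\|r_C^{(2)}\|=\sup_{z\in T^d}\|C(z)\|_{\mathrm{op}}$ for matrix symbols (standard for continuous symbols, via Fourier transform), since your whole norm estimate hinges on it; with that made explicit, the proof is complete.
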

\begin{proof}
In the sequel we will identify $L^2(\IZ^d)$ and $L^2(T^d)$ by the Fourier transformation.
We can choose a unitary $\IZ^d$-equivariant operator $U \colon L^2(\IZ^d)^k \to L^2(\IZ^d)^k$ 
and functions $f_1, f_2, \ldots, f_k \colon T^d\to \IR$
such that $0 \le f_1(z) \le f_2(z) \le \ldots \le f_k(z) $ holds for all $z \in T^d$ and 
we have the following equality of bounded $\IZ^d$-equivariant operators
$L^2(\IZ^d)^k = L^2(T^d)^k  \to L^2(\IZ^d)^k= L^2(T^d)^k$, see~\cite[Lemma~2.2]{Lueck-Roerdam(1993)}
\begin{eqnarray}
(r_B^{(2)})^* \circ r_B^{(2)}  = U \circ 
\begin{pmatrix}
r_{f_1}^{(2)} & 0 & 0  & \cdots &  0 & 0
\\ 
0 & r_{f_2}^{(2)} & 0 &  \cdots & 0 & 0
\\ 
0 & 0 & r_{f_3}^{(2)} &   \cdots & 0 & 0
\\
\vdots & \vdots & \vdots  & \ddots & \vdots & \vdots
\\
0 & 0 & 0  & \cdots & r_{f_{k-1}}^{(2)} & 0 
\\
0 & 0 & 0  &  \cdots & 0 & r_{f_k}^{(2)}  
\end{pmatrix} \circ U^*.
\label{diagonalization}
\end{eqnarray}
Since $p \not= 0$ holds by assumption and hence the rank of $B$ over $\IC[\IZ^d]^{(0)}$ is maximal, 
we conclude from~\cite[Lemma~1.34 on page~35]{Lueck(2002)}
that $r_B^{(2)}$ and hence $r_{f_i}^{(2)}$ for each $i = 1,2, \ldots, k$
are weak isomorphisms, i.e., they are  injective  and have dense images. We conclude 
from~\cite[Lemma~2.11~(11) on page~77 and Lemma~2.13 on page~78]{Lueck(2002)}
\[
F(r_B^{(2)})(\lambda) 
= 
F\left((r_B^{(2)})^* \circ r_B^{(2)} \right)(\lambda^2)
=  
\sum_{i = 1}^k F(r_{f_i}^{(2)})(\lambda^2).
\]
For  $i = 1,2, \ldots, k$ we have $f_1(z) \le f_i(z)$ for all $z \in T^d$ and hence 
$F\bigl(r_{f_i}^{(2)}\bigr)(\lambda) \le F\bigl(r_{f_1}^{(2)}\bigr)(\lambda)$ for all $\lambda \ge 0$. This implies
\begin{eqnarray}
F\bigl(r_B^{(2)}\bigr)(\lambda)  \le k \cdot F\bigl(r_{f_1}^{(2)})(\lambda^2).
\label{estimate_for_spectral_density_after_diagonalization}
\end{eqnarray}

Let $B^* \in M_{k,k}(\IC[\IZ^d)$ be the matrix obtain from $B$ by transposition and applying to each entry the involution
$\IC[\IZ^d] \to \IC[\IZ^d|$ sending $\sum_{g \in G} \lambda_g \cdot g$ to $\sum_{g \in G}\overline{\lambda_g} \cdot g^{-1}$.
Then $\bigl(r_B^{(2)}\bigr)^* = r_{B^*}^{(2)}$.
Since $(r_B^{(2)})^* \circ r_B^{(2)} = r_{BB^*}^{(2)}$ and $\det_{\IC[\IZ^d]}(BB^*) = \det_{\IC[\IZ^d]}(B) \cdot \det_{\IC[\IZ^d]}(B^*)
= p \cdot p^*$ holds, we conclude from~\eqref{diagonalization}  the equality of functions $T^d \to [0,\infty]$
\[
pp^* = \prod_{i = 1}^k f_i.
\]
Since $\sup\{|f_i(z)| \mid z \in T^d\}$ agrees with the operatornorm $||r_{f_i}^{(2}||$
and we have $||r_B^{(2)}||^2 = ||(r_B^{(2)})^* r_B^{(2)}|| = \max\bigl\{||r_{f_i}^{(2)}|| \; \bigl| \; i = 1,2, \ldots, k\} = ||r_{f_k}^{(2)}||$,
we obtain the inequality of functions $T^d \to [0,\infty]$
\[
pp^* \le \left(\prod_{i = 2}^k ||r_{f_i}^{(2)}|| \right) \cdot f_1 \le \bigl(||r_B^{(2)}||^2\bigr)^{k-1} \cdot f_1.
\]
Hence we get for  all $\lambda \ge 0$
\begin{eqnarray*}
F\bigl(r_{pp^*}^{(2)}\bigr)\left(\bigl(||r_B^{(2)}||^{k-1}  \lambda\bigr)^2\right)
& = & 
F\bigl(r_{pp^*}^{(2)}\bigr)\left(||r_B^{(2)}||^2\bigr)^{k-1}  \lambda^2\right)
\\
& \ge & 
F\left((||r_B^{(2)}||^2\bigr)^{k-1} \cdot r_{f_1}^{(2)}\right)\left(||r_B^{(2)}||^2\bigr)^{k-1} \cdot \lambda^2\right)
\\
& = & 
F\bigl(r_{f_1}^{(2)})(\lambda^2).
\end{eqnarray*}
This together with~\eqref{estimate_for_spectral_density_after_diagonalization} 
and~\cite[Lemma~2.11~(11) on page~77]{Lueck(2002)} implies
\begin{eqnarray*}
F\bigl(r_B^{(2)}\bigr)(\lambda)  
&\le &
k \cdot F\bigl(r_{f_1}^{(2)})(\lambda^2)
\\
& \le &
k \cdot F\bigl(r_{pp^*}^{(2)}\bigr)\left(\bigl(||r_B^{(2)}||^{k-1}  \lambda\bigr)^2\right)
\\
& \le &
k \cdot F\bigl(r_{p}^{(2)}\bigr)\bigl(||r_B^{(2)}||^{k-1}  \lambda\bigr).
\end{eqnarray*}
\end{proof}

\begin{proof}[Proof of the Main Theorem~\ref{the:Main_Theorem}]
\eqref{the:Main_Theorem:spectral_density_estimate}
In the sequel we denote by $\dim_{\caln(G)}$ the von Neumann dimension, see for 
instance~\cite[Subsection~1.1.3]{Lueck(2002)}.
The rank of the matrices $A$ and $B$ over the quotient field $\IC[\IZ^d]^{(0)}$ is $k$.
The operator $r_{B}^{(2)} \colon L^2(\IZ^d)^k \to L^2(\IZ^d)^k$ is a weak isomorphism, 
and $\dim_{\caln(\IZ^d)}(\overline{\im(r_A^{(2)}))} = k$ 
because of~\cite[Lemma~1.34~(1) on page~35]{Lueck(2002)}.
In particular we have $F\bigl(r_{B}^{(2)}\bigr)(0) = 0$.

Let $i^{(2)} \colon L^2(\IZ^d)^k \to L^2(\IZ^d)^m $ be the inclusion
corresponding to $I \subseteq \{1,2, \ldots, m\}$ and let 
$\pr^{(2)} \colon L^2(\IZ^d)^n \to L^2(\IZ^d)^k$ be
the projection corresponding to $J \subseteq \{1,2, \ldots, n\}$,
where $I$ and $J$ are the subsets specifying the submatrix $B$.
Then $r_{B}^{(2)}  \colon  L^2(\IZ^d)^k \to L^2(\IZ^d)^k$ 
agrees with the composite
\[
r_{B}^{(2)}  \colon L^2(\IZ^d)^k 
\xrightarrow{i^{(2)}} L^2(\IZ^d)^m
\xrightarrow{r_A^{(2)}} L^2(\IZ^d)^n
\xrightarrow{\pr^{(2)}}L^2(\IZ^d)^k.
\]
Let $p^{(2)} \colon  L^2(\IZ^d)^m \to \ker(r_A^{(2)}))^{\perp}$ be the orthogonal projection
onto the orthogonal complement $\ker(r_A^{(2)})^{\perp} \subseteq L^2(G)^m$
of the kernel of $r_A^{(2)}$.
Let $j^{(2)} \colon \overline{\im(r_A^{(2)})} \to L^2(G)^n$ 
be the inclusion of the closure of the image of $r_A^{(2)}$. 
Let $(r_A^{(2)})^{\perp}  \colon  \ker(r_A^{(2)})^{\perp}  \to \overline{\im(r_A^{(2)})}$ be the 
$\IZ^d$-equivariant bounded operator uniquely determined by 
\begin{eqnarray*}
r_A^{(2)} 
& = & 
j^{(2)} \circ (r_A^{(2)})^{\perp} \circ p^{(2)}.
\end{eqnarray*}
The operator $(r_A^{(2)})^{\perp} $
is a weak isomorphism by construction.  We have the decomposition of the weak isomorphism
\begin{eqnarray}
& & r_{B}^{(2)} = \pr^{(2)} \circ \; r_A^{(2)} \circ i^{(2)} 
 = 
\pr^{(2)} \circ j^{(2)} \circ (r_A^{(2)})^{\perp}  \circ p^{(2)} \circ i^{(2)}.
\label{lem:Estimate_in_terms_of_minors_decomposition}
\end{eqnarray}
This implies that the morphism
$p^{(2)} \circ i^{(2)}  \colon L^2(\IZ^d)^k)\to  \ker(r_A^{(2)})^{\perp}$ 
is injective and  the morphism 
$\pr^{(2)} \circ j^{(2)}  \colon \overline{\im(r_A^{(2)})} \to L^2(\IZ^d)^k$
has dense image. Since we already know
$\dim_{\caln(G)}\bigl(\overline{\im(r_A^{(2)})}\bigr)   = k =  \dim_{\caln(G)}\bigl(L^2(\IZ^d)^k\bigr)$,
the operators $p^{(2)} \circ i^{(2)}  \colon L^2(\IZ^d)^k\to  \ker(r_A^{(2)})^{\perp}$ 
and  $\pr^{(2)} \circ j^{(2)}  \colon \overline{\im(r_A^{(2)})} \to L^2(\IZ^d)$
are weak isomorphisms.   Since the operatornorm of $ \pr^{(2)} \circ j^{(2)}$ 
and of $p^{(2)} \circ i^{(2)}$ is less or equal to $1$, 
we conclude from~\cite[Lemma~2.13 on page~78]{Lueck(2002)} 
and~\eqref{lem:Estimate_in_terms_of_minors_decomposition}
\begin{eqnarray*}
\lefteqn{F\bigl(r_A^{(2)}\bigr)(\lambda) - F\bigl(r_A^{(2)}\bigr)(0)}
& & 
\\
& = &
F\bigl((r_A^{(2)})^{\perp}\bigr)(\lambda) 
\\
& \le  & 
F\bigl(\pr^{(2)} \circ j^{(2)} \circ (r_A^{(2)})^{\perp}  \circ p^{(2)} \circ i^{(2)}\bigr)
\bigl(||\pr^{(2)} \circ j^{(2)}|| \cdot  ||p^{(2)} \circ i^{(2)}|| \cdot \lambda\bigr) 
\\
& =  & 
F\bigl(r_{B}^{(2)}\bigr)\bigl(||\pr^{(2)} \circ j^{(2)}|| \cdot  ||p^{(2)} \circ i^{(2)}|| \cdot \lambda\bigr) 
\\
& \le   & 
F\bigl(r_{B}^{(2)}\bigr)(\lambda).
\end{eqnarray*}
Put $p = \det_{\IC[\IZ^d]}(B)$. 
If $\width(p) = 0$, the claim follows directly from
Proposition~\ref{pro:case_m_is_n_is_1}.
It remains to treat the case $\width(p) \ge 1$.
The last inequality together with~\eqref{norm_estimate_by_L1-norm} applied to $B$,
Proposition~\ref{pro:case_m_is_n_is_1}  applied to $p$ and Lemma~\ref{lem:preparation} applied to $B$ 
yields for $\lambda \ge 0$
\begin{eqnarray*}
\lefteqn{F\bigl(r_A^{(2)}\bigr)(\lambda) - F\bigl(r_A^{(2)}\bigr)(0)}
& & 
\\
& \le & 
F\bigl(r_{B}^{(2)}\bigr)(\lambda)
\\
&\le & 
k \cdot F\bigl(r_p^{(2)}\bigr)\bigl(||r_{B}^{(2)}||^{k-1} \cdot \lambda)
\\
&\le & 
k \cdot  F\bigl(r_p^{(2)}\bigr)\bigl((k^2 \cdot ||B||_1)^{k-1} \cdot \lambda)
\\
& \le &
 \frac{8 \cdot \sqrt{3}}{\sqrt{47}} \cdot k \cdot d \cdot \width(p)  
\cdot \left(\frac{k^{2k -2} \cdot (||B||_1)^{k-1} \cdot \lambda}{|\lead(p)|}\right)^{\frac{1}{d \cdot \width(p)}}.
\end{eqnarray*}
This finishes the proof of assertion~\eqref{the:Main_Theorem:spectral_density_estimate}.
Assertion~\eqref{the:Main_Theorem:Novikov-Shubin} is a direct consequence of 
 assertion~\eqref{the:Main_Theorem:spectral_density_estimate} and the definition of the Novikov-Shubin invariant.
This finishes the proof of Theorem~\ref{the:Main_Theorem}.
\end{proof}

%%%%%%%%%%%%%%%%%%%%%%%%%%%%%%%%%%%%%%%%%%%%%%%%%%%%%%%%%%%%%%%%%%%%%%%%%%%%%%%%%
%%%%%%%%%%%%%%%%%%%%%%%%%%%%%% Section 3:  Example  %%%%%%%%%%%%%%%%%%%%%%%
%%%%%%%%%%%%%%%%%%%%%%%%%%%%%%%%%%%%%%%%%%%%%%%%%%%%%%%%%%%%%%%%%%%%%%%%%%%%%%%%%

\typeout{---------------  Section 3: Example  ---------------------}

% The maps $i_0^{(2)}$ and $\pr_0^{(2)}$ have norm less or equal to $1$ and hence their Fuglede-Kadison determinants are less or equal to $1$.
% We also have
% \begin{eqnarray*}
% {\det}_{\caln(\IZ^d)}\bigl(\pr_0^{(2)} \circ r_A^{(2)})^{\perp} \circ i_0^{(2)}\bigr)
% & \le & 
% {\det}_{\caln(\IZ^d)}(\pr_0^{(2)})
% \cdot 
% {\det}_{\caln(\IZ^d)}(r_A^{(2)})^{\perp})
% \cdot 
% {\det}_{\caln(\IZ^d)}(i_0^{(2)})
% \end{eqnarray*}
% and hence
% \begin{eqnarray*}
% {\det}_{\caln(\IZ^d)}(r_A^{(2)})
% & = &
% {\det}_{\caln(\IZ^d)}(r_A^{(2)})^{\perp}) 
% \\
% & \le & 
% {\det}_{\caln(\IZ^d)}(\pr_0^{(2)})
% \cdot 
% {\det}_{\caln(\IZ^d)}(r_A^{(2)})^{\perp})
% \cdot 
% {\det}_{\caln(\IZ^d)}(i_0^{(2)})
% \\
% &\le &
%  {\det}_{\caln(\IZ^d)}\bigl(\pr_0^{(2)} \circ r_A^{(2)})^{\perp} \circ i_0^{(2)}\bigr)
% \\
% & = & 
% {\det}_{\caln(\IZ^d)}\bigl(\pr ^{(2)}\circ r_A^{(2)} \circ i^{(2)}\bigr).
% \end{eqnarray*}

%%%%%%%%%%%%%%%%%%%%%%%%%%%%%%%%%%%%%%%%%%%%%%%%%%%%%%%%%%%%%%%%%%%%%%%%%%%%%%% 
%%%%%%%%%%%%%%%%%%%%%%%%%%%%%%%%%%%%%% Reference %%%%%%%%%%%%%%%%%%%%%%%%%%%%%%%%%
%%%%%%%%%%%%%%%%%%%%%%%%%%%%%%%%%%%%%%%%%%%%%%%%%%%%%%%%%%%%%%%%%%%%%%%%%%%%%%%

\typeout{-------------------------------------- References  ---------------------------------------}

%\bibliographystyle{abbrv}
%\bibliography{dbpub,dbpre}

%\version{20.03.2014}

\end{document}